\documentclass[3p,sort&compress]{personalartcl}
\usepackage[T1]{fontenc}

% Change numbers (1, a, i, I... in the enumerate environement)
\usepackage{enumerate}
\usepackage{enumitem}
%Language
\usepackage[british]{babel}
% Special chars
\usepackage[utf8]{inputenc}
% Fonts
\usepackage{dsfont,bbm,mathrsfs}
% AMS Packages
\usepackage{amsmath, amsthm, amsbsy, amssymb}
\usepackage[all]{xy}
\usepackage{tikz}
\usetikzlibrary{matrix,arrows,chains,positioning,scopes,decorations.pathmorphing,backgrounds,fit}

%%%%%%%%%%%%% MACROS %%%%%%%%%%%%
%%%%%%%%%%%%%%%%%%%%%%%%%%%%%%%

\newcommand{\remove}[1]{}

\renewcommand{\leq}{\leqslant}
\renewcommand{\geq}{\geqslant}

%\newcommand{\0}{{\mathds{0}}}% Bold 0
% Bold 1

%%%%%%% NUMBER SYSTEMS

\DeclareMathOperator{\Z}{\mathbb{Z}}

\DeclareMathOperator{\R}{\mathbb{R}}

%%%%% GENERIC VARIETIES AND FUNCTORS %%%%%%%
 % Variety
 % Variety
 % free

%%%%%%%%%%%% CATEGORIES %%%%%%%%%%%

%%%%%%%%% OTHER OPERATORS %%%%%%%%%
 %Ordinal sum

 %free MV-algebra

\newcommand{\ie}{\textit{i.e.}}
\newcommand{\cf}{\textit{cf.}}
\newcommand{\eg}{\textit{e.g.}}

%%%%%%%%%%%%%%%%%%% AMBIENTI %%%%%

%%%%%%%% MAIN RESULTS %%%%%%%%%%%%%%%%
\theoremstyle{theorem}

%%%%%%%% END MAIN RESULTS %%%%%%%%%%

\newtheorem{theorem}{Theorem}[section]
\newtheorem*{theorem1}{Theorem}

\newtheorem{lemma}[theorem]{Lemma}
\newtheorem{lemmac}[theorem]{Chinese Remainder Theorem}
\newtheorem{lemmap}[theorem]{The Patching Lemma}
\newtheorem{lemmas}[theorem]{The strongly semisimple Patching Lemma}
\newtheorem{lemmasc}[theorem]{The strongly semisimple Chinese Remainder Theorem}
\newtheorem{corollary}[theorem]{Corollary}

\theoremstyle{definition}

\newtheorem{remark}[theorem]{Remark}

\newtheorem{example}[theorem]{Example}

%%%%%% SPEC
\DeclareMathOperator{\Spec}{\rm Spec}
\DeclareMathOperator{\Max}{\rm Max}
\DeclareMathOperator{\Rad}{\rm Rad}
\newcommand{\p}{\mathfrak{p}}

\newcommand{\m}{\mathfrak{m}}
\DeclareMathOperator{\V}{\mathbb{V}}
\DeclareMathOperator{\I}{\mathbb{I}}

\DeclareMathOperator{\Idl}{\rm Idl}
\DeclareMathOperator{\PrIdl}{\rm Prin}
\DeclareMathOperator{\C}{\rm C}

\DeclareMathOperator{\cl}{\rm cl}

%%%%%%%%%%%%%%%%%%%%%%%%%         TITOLO       %%%%%%%%%%%%%%%%%%%%%%%%%%

\begin{document}
\begin{frontmatter}
\title{The Chinese Remainder Theorem  for strongly semisimple MV-algebras and lattice-groups.\\{\medskip\small {\hfill \sl For Tonino}}}
\author{Vincenzo Marra}%\corref{cor}}
\ead{vincenzo.marra@unimi.it}
\address{Dipartimento di Matematica ``Federigo Enriques''\\ Universit\`a degli
Studi di Milano\\ Via Cesare Saldini 50, 20133 Milano, Italy}

\journal{Mathematica Slovaca, Antonio Di Nola's Festschrift,}
\date{May 25, 2013}
%%% Keywords
\begin{abstract}An MV-algebra (equivalently, a lattice-ordered Abelian group with a distinguished order unit) is strongly semisimple if all of its quotients modulo finitely generated congruences are semisimple. All MV-algebras satisfy a Chinese Reminder Theorem, as was first shown by Keimel four decades ago in the context of lattice-groups. In this note we prove that  the Chinese Remainder Theorem admits a considerable strengthening for strongly semisimple structures.
 \end{abstract}
\begin{keyword}Lattice-ordered Abelian group \sep MV-algebra \sep strong order unit \sep semisimple algebra \sep spectral space \sep Zariski topology \sep hull-kernel topology \sep Chinese Remainder Theorem.
\MSC[2010]{
Primary: 06F20. %Insert
Secondary:  06D35. %Insert
%\sep ?. %Equational classes, universal algebra
%
}
\end{keyword}
\end{frontmatter}
\section{Introduction.}\label{s:intro}
We assume  familiarity with MV-algebras \cite{cdm} and lattice-ordered Abelian groups \cite{bkw}, which we write in additive notation and call \emph{$\ell$-groups} for short.  The mathematics in this note can be presented either in the language of MV-algebras, or in the categorically equivalent language of  $\ell$-groups with a strong order unit, thanks to  \cite[Theorem 3.9]{Mundici86}. We choose $\ell$-groups because we are going to use \cite[Chapitre 10]{bkw} as a convenient source for Keimel's spectral spaces of lattice-groups. We write `$\ell$-ideal' to mean `order-convex sublattice subgroup'; such are precisely the kernels of lattice-group homomorphisms, which are in bijection with  congruences.  If $I$ is an $\ell$-ideal of $G$ and $g \in G$, we write $G/J$ for the quotient $\ell$-group, and  $[g]_{I}\in G/I$ for the congruence class of $g$ modulo $I$, \ie\ for the coset $g/I:=g+I:=\{g +h \mid h \in I\}$. 
  It is clear that an arbitrary intersection of $\ell$-ideals is again an $\ell$-ideal. The $\ell$-ideal generated by a subset $S\subseteq G$ of an $\ell$-group $G$ is therefore defined as the intersection of all $\ell$-ideals of $G$ containing $S$.

\smallskip In his work on sheaf-theoretic representation of $\ell$-groups \cite{keimel}, \cite[Chapitre 10]{bkw}, Keimel used the following
\begin{lemmac}\label{l:cry}Let $\{I_i\}_{i=1}^n$ be a finite collection of $\ell$-ideals of an $\ell$-group $G$, and suppose that $\{g_i\}_{i=1}^n$ is a finite subset of $G$ such that 
\begin{align*}
[g_i]_{I_i \vee I_j}=[g_j]_{I_i \vee I_j}
\end{align*}
for each $i ,j \in \{1,\ldots, n\}$, where $I_i\vee I_j$ denotes the $\ell$-ideal of $G$ generated by $I_i \cup I_j$. Then there exists an element $g \in G$
 such that $[g]_{I_i}=[g_i]_{I_i}$ for each $i \in \{1,\ldots,n\}$.
\end{lemmac}
\begin{proof}This  is \cite[10.6.3]{bkw}.
\end{proof}
In one form or another, this result plays a  r\^{o}le
in any sheaf-theoretic representation of MV-algebras and lattice-groups. In Yang's Ph.D.\ Thesis \cite[Proposition 5.1.2, see also Remark 5.3.12]{Yang06}, Keimel's Chinese Remainder Theorem is used.
Dubuc's and Poveda's Pullback-Pushout Lemma \cite[3.11]{dubucpoveda}  is a consequence of the Chinese Remainder Theorem. In Filipoiu's and Georgescu's paper \cite{FilGeo1995} the result, while not featuring explicitly,  floats beneath the surface---see \eg\ \cite[Proposition 2.16]{FilGeo1995}. Schwartz \cite[Theorem 3.1]{Schwartz2013} invokes Keimel's result to obtain his  representation; on the other hand, Rump and Yang \cite{RuYa2008} use Keimel's sheaf-theoretic representation without employing the result directly. In \cite[Theorem 2.6]{FerrLett2011},  Ferraioli and Lettieri prove the  Chinese Remainder Theorem for MV-algebras. They do not use nor mention Keimel's  result; \cf\ also  \cite{DuPo2012}.  Let us also point out that sheaf-theoretic representations and Chinese Remainder Theorems were  studied at the level of general algebra by Vaggione in \cite{Vag1992},  whose results extend  the previous ones by Krauss and Clark \cite{KraCla1979}; see also Cornish's earlier paper \cite{Cor1977} in the same direction. For a  unified approach to sheaf-theoretic representations of MV-algebras using Stone-Priestley duality for distributive lattices, see \cite{gvm}.

\smallskip Here we prove that strongly semisimple unital $\ell$-groups and MV-algebras  enjoy a considerably stronger version of the general Chinese Remainder Theorem.  Let us recall some basic facts first. 

\smallskip
An $\ell$-ideal $\p$ of an $\ell$-group $G$ is \emph{prime}  \cite[2.4.1]{bkw} if it is proper (\ie\ $\neq G$), and the quotient $G/\p$ is totally ordered; and it is \emph{maximal} if it is proper, and maximal with respect to  inclusion. It is elementary that each maximal $\ell$-ideal is prime.  Principal (=singly generated)  and finitely generated $\ell$-ideals  can be shown to coincide.

 A \emph{unital} $\ell$-group $(G,u)$ is an $\ell$-group equipped with a \emph{\textup{(}strong order\textup{)} unit} \cite[2.2.12]{bkw}: a non-negative  element $u \in G$ whose positive multiples eventually dominate each element of $G$. The unital setting is convenient because it ensures that spaces of prime congruences are compact, see Section \ref{s:pre}. 
 By a \emph{unital $\ell$-subgroup} of the unital $\ell$-group $(G,u)$ we mean an $\ell$-subgroup of $G$ that contains $u$. By `$\ell$-homomorphism' we mean `lattice-group homomorphism', and by `unital $\ell$-homomorphism' we mean `unit-preserving $\ell$-homomorphism'.

The Archimedean property plays a substantive r\^{o}le in this paper.
 The unital $\ell$-group $(G,u)$ is \emph{simple} if it has no proper, non-trivial (\ie\ $\neq \{0\}$) $\ell$-ideals. It  is \emph{semisimple} if the intersection of all its maximal $\ell$-ideals is the trivial $\ell$-ideal $\{0\}$; equivalently, if it is a unital subdirect product of simple unital $\ell$-groups; equivalently,  if it is \emph{Archimedean}: for all $g,h \in G$ with $0< g\leq h$, there is an integer $m\geq 0$ such that $mg \not \leq h$. Not all of these formulations of the Archimedean property are equivalent in the absence of a unit; see  Lemma \ref{l:ss} and Remark \ref{r:ss} below. Finally, $(G,u)$ is \emph{strongly semisimple} if each one of its quotients modulo a principal  $\ell$-ideal is semisimple. Since we are not excluding the trivial principal $\ell$-ideal $\{0\}$ in the preceding definition, if $G$ is strongly semisimple then it is semisimple.
\begin{remark}In the literature on MV-algebras, the term ``strongly semisimple''  first appeared in \cite{dubucpoveda}. However, the closely related concept of ``logically complete MV-algebras'' was already introduced by Belluce and Di Nola in \cite{BellDiN}. By contrast, to the best of our knowledge strong semisimplicity has not been considered  in the literature on lattice-groups.\qed
\end{remark}
 Now let us consider  a unital $\ell$-group $(G,u)$ with unit $u$, and let $\Max{G}$ denote its set of maximal $\ell$-ideals. The subsets  of the form $\{\m \in \Max{G} \mid \m \supseteq I\}$, as $I$ ranges over all possible $\ell$-ideals of $G$, are the closed sets of a compact Hausdorff topology on $\Max{G}$, known as the \emph{hull-kernel topology}, \cf\ \cite[p.\ 111]{gj}. A  construction of Yosida \cite{yosida}, significantly generalised in \cite{hager_robertson}, associates to each element $g \in G$ a  continuous function $\widehat{g}\colon \Max{G}\to \R$. In the  unital case, the crux of the matter is the classical result by H\"older \cite{holder} that each quotient $(G/\m, [u]_\m)$, with $\m$ a maximal $\ell$-ideal of $G$, admits a \emph{unique} unit-preserving injective  $\ell$-homomorphism $\mathfrak{h}_{\m}\colon (G/\m, [u]_\m) \hookrightarrow (\R,1)$. Then $\widehat{g}(\m)$ is the unique real number $\mathfrak{h}_{\m}([g]_\m)$. See  Lemma \ref{l:holder} below. A subset $Z\subseteq \Max{G}$ is a \emph{principal zero set} if it is of the form $Z=\widehat{g}^{-1}(0)$ for some $g\in G$; equivalently, $Z=\{\m \in \Max{G} \mid g \in \m\}$. It is an exercise to check that the principal zero sets form a basis of closed sets for the hull-kernel topology of $\Max{G}$. Main result:
  \begin{theorem1}Let $(G,u)$ be a unital $\ell$-group, and let $\{Z_i\}_{i=1}^n$ be a finite collection of  principal zero sets of $\Max{G}$. Suppose that $\{g_i\}_{i=1}^n\subseteq G$ is compatible with $\{Z_i\}_{i=1}^n$, meaning  that $\widehat{g}_i(\m)=\widehat{g}_j(\m)$ for each $i, j \in \{1,\ldots, n\}$ and each $\m \in Z_i \cap Z_j$. If $(G,u)$ is strongly semisimple, there exists an element $g \in G$ such that $\widehat{g}(\m)=\widehat{g}_i(\m)$ for each $i \in \{1,\ldots,n\}$ and each $\m
\in Z_i$. If, moreover, $\bigcup_{i=1}^n Z_i=\Max{G}$, such an element $g$ is unique.
\end{theorem1}
We prove the theorem in Section \ref{s:theo},  after some preliminaries in Section \ref{s:pre}. We will see in due course that the theorem above is a Chinese Remainder Theorem for semisimple structures, stated in the functional language of the Yosida representation. Indeed, while the result is formulated here  in terms
of $\Max{G}$,   we prove as Lemma \ref{d:sspatch} an equivalent statement for the  spectral space of \emph{prime} $\ell$-ideals of $G$. Lemma \ref{d:sspatch} is a (new) strengthening for strongly semisimple structures of the (known)  Patching Lemma \ref{c:patch} that holds in full generality. The Patching Lemma, in turn, is the dual spectral version of Keimel's Chinese Remainder Theorem  \ref{l:cry} for lattice-groups. Symmetrically, the dual algebraic version of Lemma \ref{d:sspatch}, given below as  Theorem \ref{sscrt}, provides a  (new) Chinese Remainder Theorem for strongly semisimple structures. Our reformulation of the theorem above in the spectral language of  Lemma \ref{d:sspatch} requires the characterisation of strongly semisimple structures in terms of the topology of the prime spectrum. This we achieve in Corollary \ref{c:ss}; a similar result for Archimedean $\ell$-groups has been previously obtained by Yang \cite[Proposition 5.3.1]{Yang06}.

\section{Preliminary results.}\label{s:pre}
Throughout this note we let $(G,u)$ denote a unital $\ell$-group.
\subsection{Spectral spaces of unital lattice-ordered Abelian groups.}\label{ss:spec}
Consider the family $\Idl{G}$ of all $\ell$-ideals of $G$. Then, as proved in \cite[2.2.7 and 2.2.9]{bkw}, $\Idl{C}$ is a complete bounded distributive lattice under the operations $I\wedge J := I \cap J$ and $I\vee J:=\langle I \cup J\rangle$, where $I,J \in \Idl{G}$ and $\langle S \rangle$ denotes  the intersection of all $\ell$-ideals of $G$ containing $S$.
  Let $\PrIdl{G}$ denote the subset of
$\Idl{G}$ consisting of principal $\ell$-ideals. Then  $\PrIdl{G}$ is in fact a sublattice of $\Idl{G}$, \cite[2.2.11]{bkw}. Write $\Spec{G}$ for the set of prime $\ell$-ideals of $G$, and $\Max{G}\subseteq \Spec{G}$ for the set of all maximal $\ell$-ideals of $G$. 

For any $R\subseteq G$, set
\begin{align}\label{eq:v}
\V{(R)}:=\left\{\,\p \in \Spec{G}\, \mid \,\p \,\supseteq R\,\right\}\,.
\end{align}
Symmetrically, for any $S\subseteq \Spec{G}$, set
\begin{align}\label{eq:i}
\I{(S)}:=\left\{\,g \in G\, \mid \, g \in \p \text{ for each } \p \in S\,\right\}=\bigcap S\,.
\end{align}
For any set $X$, we let $2^{X}$ denote the set of the subsets of $X$.
\begin{lemma}[Keimel's Spectral Topology]\label{l:spec basics}Consider the functions $\V\colon 2^{G}\longrightarrow  2^{\Spec{G}}$ and $\I\colon 2^{\Spec{G}}\longrightarrow 2^{G}$
given by \textup{(\ref{eq:v}--\ref{eq:i})}.
\begin{enumerate}
\item The pair $(\V,\I)$ yields a \textup{(}contravariant\textup{)} Galois connection between the $2^{G}$ and $2^{\Spec{G}}$. That is, 
\begin{align*}
R\subseteq \I{(S)} \ \ \ \text{if, and only if,} \ \ \ S\subseteq   \V{(R)}
\end{align*}
for all $R\subseteq G$ and $S\subseteq \Spec{G}$.
\item $\V$ reverses arbitrary unions to intersections: $\V{\left(\bigcup_{i \in D} R_i\right)} = \bigcap_{i \in D} \V{\left(R_i\right)}$, where $R_i$ is a subset of $G$, and $D$ is an arbitrary index set. Similarly, $\I$ reverses arbitrary unions to intersections: $\I{\left(\bigcup_{i \in D} S_i\right)} = \bigcap_{i \in D} \I{\left(S_i\right)}$, where $S_i$ is a subset of $\Spec{G}$, and $D$ is an arbitrary index set.
\item The fixed subsets of $G$, \ie\ the ones for which $\I{(\V{(R)})}=R$, are precisely its $\ell$-ideals. 
\item The fixed subsets of $\Spec{G}$, \ie\ the ones for which $\V{(\I{(S)})}=S$, are precisely the \emph{vanishing loci} 
of $\ell$-ideals, that is, those of the form
\begin{align*}\tag{*}\label{t:zariskiclosed}
\V{(I)}, \ \ \text{for some } I \in \Idl{G}\,.
\end{align*}
\item The sets of the form \textup{(\ref{t:zariskiclosed})} are the closed sets of a topology\footnote{This topology on $\Spec{G}$ is variously called its \emph{Zariski}, or \emph{spectral}, or \emph{hull-kernel} topology.}  on $\Spec{G}$, whose associated closure operator is given by $\cl:=\V\circ\I\colon 2^{\Spec{G}}\longrightarrow  2^{\Spec{G}}$.
\item The space $\Spec{G}$ is \emph{spectral:}\footnote{In the sense of Hochster \cite[p.\ 43]{hochster}.} it is $T_{0}$, compact, its compact open subsets form a basis of opens sets stable under finite  intersections, and it is \emph{sober} --- \ie\ every non-empty, closed subset  that cannot be written as the union of two proper closed subsets, has a dense point. 
\item The compact and open subsets of $\Spec{G}$ are precisely the complements of those of the form $\V{(P)}$, for $P$ a principal $\ell$-ideal of $G$. Hence, the lattice $\PrIdl{G}$ is isomorphic and anti-isomorphic to the lattices of compact open and  co-compact\footnote{Throughout, we write `co-compact set' to mean `complement of a compact set'.} 
 closed subsets of $\Spec{G}$, respectively.\footnote{In particular, the  space prime lattice ideals of the bounded distributive lattice $\PrIdl{G}$, endowed with the usual Stone topology --- see \eg\ \cite{johnstone} ---  is homeomorphic to $\Spec{G}$.} 
 \item Equip  $\Max{G}$ with the subspace topology that it inherits from $\Spec{G}$. Then $\Max{G}$  is a compact Hausdorff space. 
\end{enumerate}
\end{lemma}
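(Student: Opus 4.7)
The plan is to cluster the eight statements into (a) the Galois-theoretic items (1)--(4), (b) the topological items (5)--(7) characterising the Zariski topology as a spectral space, and (c) the separate analysis of $\Max{G}$ in (8).

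Items (1) and (2) are purely formal: both halves of the equivalence in (1) unfold to the same statement that every element of $R$ belongs to every prime of $S$, and the reversal of unions to intersections in (2) is the standard consequence of any contravariant Galois adjunction. For (3), the inclusion $R \subseteq \I(\V(R))$ is automatic, and any fixed subset $R$ is an intersection of $\ell$-ideals, hence an $\ell$-ideal. The substantive direction --- that an $\ell$-ideal $R$ equals the intersection of the primes containing it --- reduces via Zorn's lemma to the claim that for any $g \notin R$ there is a prime $\ell$-ideal containing $R$ but not $g$. One picks a maximal element $\p$ among $\ell$-ideals containing $R$ and missing $g$, and verifies primality by exploiting the lattice operations on $G$, as in \cite[2.4]{bkw}. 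Item (4) is then a formal consequence of Galois theory: the fixed subsets under $\V \circ \I$ are precisely the image of $\V$, and $\V(R) = \V(\langle R \rangle)$.

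For (5), closure under arbitrary intersections follows from (2); for finite unions, the key identity is $\V(I) \cup \V(J) = \V(I \cap J)$, where the nontrivial inclusion reduces to the property that if a prime $\p$ contains $I \cap J$ then it contains $I$ or $J$, a direct consequence of $G/\p$ being totally ordered. The closure operator is $\V \circ \I$ by (3)--(4). For (6), $T_{0}$-ness is immediate from (3), and soberness follows because the irreducible closed set $\V(\p)$ admits $\p$ as a generic point, with uniqueness from $T_{0}$. Compactness of $\Spec{G}$ is the substantive step where the strong unit is essential: a cover by principal basic opens $\Spec{G} \setminus \V(P_i)$ without a finite subcover would yield, by Zorn's lemma, a prime $\ell$-ideal avoiding $u$, which is impossible since any proper $\ell$-ideal must miss some positive element dominated by a multiple of $u$. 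Item (7) identifies compact opens with complements of $\V(P)$ for $P$ principal: principal $\ell$-ideals correspond to the compact elements of $\Idl{G}$, which form a sublattice by \cite[2.2.11]{bkw}, and the asserted lattice (anti-)isomorphisms follow by transporting through $\V$ and $\I$.

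For (8), compactness of $\Max{G}$ is inherited from $\Spec{G}$ via the retraction sending each prime to the unique maximal ideal containing it (unique because the $\ell$-ideals above a prime form a chain, since $G/\p$ is totally ordered). Hausdorffness is again a consequence of the strong unit: given distinct $\m_1 \neq \m_2$, maximality forces $\m_1 \vee \m_2 = G \ni u$, and a standard manipulation of positive and negative parts in $G$ (see \cite[10.2]{bkw}) produces disjoint positive elements $a_1 \in \m_1$, $a_2 \in \m_2$ with $a_1 \wedge a_2 = 0$, neither lying in the other's ideal, which yields disjoint basic open neighbourhoods $\Max{G} \setminus \V(a_2) \ni \m_1$ and $\Max{G} \setminus \V(a_1) \ni \m_2$. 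The main obstacle throughout is precisely the compactness of $\Spec{G}$ together with the Hausdorffness of $\Max{G}$: these encode the full strength of the unit hypothesis, whereas the remaining items are either formal Galois theory or a routine Zorn-extension argument.
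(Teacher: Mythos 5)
The paper's own ``proof'' of this lemma is essentially a list of citations (items 5--7 are \cite[10.1.2--10.1.10]{bkw}, item 8 is \cite[10.2.5]{bkw}, items 3--4 rest on \cite[2.5.5]{bkw}), so your self-contained reconstruction is a genuinely different and more informative route; almost all of it correctly reproduces the standard arguments of the cited sources: the Galois formalities for (1)--(4), the separation of $g\notin R$ from an $\ell$-ideal $R$ by a value (a prime maximal with respect to missing $g$), the identity $\V(I)\cup\V(J)=\V(I\cap J)$ via primality, the identification of compact opens with complements of $\V(P)$ for $P$ principal, and the Hausdorffness of $\Max{G}$ via orthogonal positive elements $a_1\wedge a_2=0$ with $a_i\in\m_i\setminus\m_j$.

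Two points need repair. First, your compactness argument for $\Spec{G}$ concludes with ``a prime $\ell$-ideal avoiding $u$, which is impossible''; as written this is false, since \emph{every} proper (in particular every prime) $\ell$-ideal avoids $u$. The correct contradiction is different: if the basic open cover $\{\Spec{G}\setminus\V(P_i)\}$ had no finite subcover, then each finite join $\bigvee_{i\in F}P_i$ would be proper, hence would miss $u$; by the finite character of $\ell$-ideal generation the full join $\bigvee_i P_i$ would also miss $u$ and hence be proper, so it would extend to a prime $\p$ containing every $P_i$ --- a point of $\Spec{G}$ lying in no member of the cover, contradicting that it is a cover. The unit is used only to keep the directed union proper. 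Second, deducing compactness of $\Max{G}$ from the retraction $\p\mapsto(\text{the unique maximal }\ell\text{-ideal above }\p)$ presupposes that this retraction is continuous, which is itself a nontrivial fact (essentially the content of \cite[10.2]{bkw}); a direct finite-intersection-property argument --- a family of closed sets $\V(I_j)\cap\Max{G}$ with the finite intersection property forces each finite join $\bigvee_{j\in F}I_j$ to miss $u$, hence $\bigvee_j I_j$ is proper and extends to a maximal $\ell$-ideal lying in every member of the family --- is cheaper and avoids the circularity. A minor gap in the same spirit: for sobriety you must also show that every non-empty irreducible closed set is of the form $\V(\p)$ with $\p$ prime, i.e.\ that $\I(S)$ is prime when $S$ is irreducible closed (use $\V(\{a\wedge b\})=\V(\{a\})\cup\V(\{b\})$ for $a,b\geq 0$); asserting that $\V(\p)$ has generic point $\p$ covers only the easy half.
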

\begin{proof}Item 1 is an exercise. Item 2 is a general property of Galois connections, see \eg\ \cite{erneetal} for background. Items 3 and 4 are easy to prove using the fact that each $\ell$-ideal is an intersection of the prime $\ell$-ideals that contain it \cite[2.5.5]{bkw}. Items 5--7 are proved in \cite[10.1.2--10.1.10]{bkw}. Item 8 is \cite[10.2.5]{bkw}.
\end{proof}
It is immediate to describe  the closure operator $\cl\colon 2^{\Spec{G}}\longrightarrow  2^{\Spec{G}}$ in terms of intersections of primes. 
For any subset $S\subseteq \Spec{G}$, we have
\begin{align}\label{eq:cl}
\cl{S}=\left\{\p \in \Spec{G} \, \mid\, \p \supseteq \bigcap S \right\}.
\end{align}
Indeed, by definition $\cl{S}=\V{(\I{(S)})}=\{\p \in \Spec{G} \, \mid\, \p \supseteq \I{(S)}\}$, and $\I{(S)}=\bigcap S$.

\subsection{The Patching Lemma.}\label{s:crt}

\begin{lemmap}\label{c:patch}Let $\{U_i\}_{i=1}^n$ be a finite collection of  closed subsets of $\Spec{G}$, and suppose that $\{g_i\}_{i=1}^n$ is a finite subset of $G$ such that $[g_i]_\p=[g_j]_\p$ for each $i , j \in \{1,\ldots, n\}$ and each $\p \in U_i \cap U_j$. Then there exists an element $g \in G$, such that $[g]_\p=[g_i]_\p$ for each $i \in \{1,\ldots,n\}$ and each $\p \in U_i$. %If, moreover, $S_{1},\ldots,S_{n} \subseteq G$ are such that $\V{(S_{i})}=U_{i}$ for each $i=1,\ldots,n$, then $g$ may be assumed to lie in the $\ell$-subgroup of $G$ generated by $\left(\bigcup_{i=1}^{n}S_{i}\right)\cup\{g_i\}_{i=1}^n$. 
\end{lemmap}
\begin{proof}This is a direct dualisation of  the Chinese Remainder Theorem \ref{l:cry} to $\Spec{G}$, using Lemma \ref{l:spec basics}.
\end{proof}
\subsection{The Archimedean property,  semisimplicity, and the maximal spectrum.}
For $X$ a topological space, a subset $S\subseteq \C{(X)}$ is said to \emph{separate the points} (of $X$) if for each $x\neq y \in X$ there is $f\in S$ with $f(x)=0$ and $f(y)\neq 0$.
\begin{lemma}[H\"older's Theorem \cite{holder}, and Yosida's Theorem\footnote{While \cite[Theorems 1 and 2]{yosida} prove more than our (iii--iv), that is all we need in this paper.} \cite{yosida}]\label{l:holder} Let $(G,u)$ be a unital $\ell$-group.
\begin{enumerate}
\item If  $G$ is Archimedean and totally ordered, then there is a unique unital injective $\ell$-homomorphism $(G,u)\hookrightarrow (\R,1)$. \
\item If  $\m \in \Max{G}$, then there is a unique unital injective $\ell$-homomorphism $\mathfrak{h}_{\m}\colon (G/\m,u/\m)\hookrightarrow (\R,1)$.
\item If $g\in G$, the function 
\begin{align*}
\widehat{g}\colon \Max{G} &\longrightarrow \R\\
\m \in \Max{G} &\longmapsto \mathfrak{h}_{\m}(g/\m)\in\R,
\end{align*}
where $\mathfrak{h}_\m$ is given by \textup{2}, is continuous with respect to  the Euclidean topology of $\R$.
\item Let $X$ be a compact Hausdorff space, and assume that $G\subseteq \C{(X)}$ is an $\ell$-subgroup, and that  $u=1_X$. If $G$ separates the points of $X$, then $\Max{G}$ is homeomorphic to $X$ via the map
 \begin{align*}
 \m \in \Max{G} \ \longmapsto \ x(\m)\in X,
 \end{align*}
where $x(\m)$ is the unique point of $X$ in the set $\bigcap \{f^{-1}(0)\mid f \in \m\}$.
\end{enumerate}
\end{lemma}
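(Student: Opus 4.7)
The plan is to treat the four parts in order: (1) gives H\"older's embedding of an Archimedean totally ordered unital $\ell$-group into $(\R,1)$; (2) reduces to (1) once $G/\m$ is seen to be Archimedean and totally ordered; (3) is a continuity statement that follows from the hull-kernel topology being generated by complements of principal zero sets; and (4) identifies $X$ with $\Max G$ via evaluation maps.

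For (1), I would construct the embedding by Dedekind cuts: for each $g \in G$, set
\[
L_g := \{m/n \in \Q : n > 0,\ mu \leq ng\}.
\]
Totality of the order on $G$ makes $L_g$ downward closed; the Archimedean property forces $L_g$ to be a proper, non-empty cut; and $r(g) := \sup L_g$ is a well-defined real. A routine verification shows that $g \mapsto r(g)$ is a unital, order-preserving group homomorphism; order preservation between totally ordered groups automatically yields an $\ell$-homomorphism, and Archimedeanness applied to $|g|$ and $u$ gives injectivity. Any unital homomorphism $(G,u)\to(\R,1)$ is forced to send $g$ to the real number cut out by $L_g$, so uniqueness is automatic.

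Part (2) reduces to (1): primality of $\m$ makes $G/\m$ totally ordered, and maximality makes it simple. In a simple unital $\ell$-group the $\ell$-ideal of elements infinitesimal relative to some fixed positive bound is proper (it avoids the unit) and hence trivial by simplicity, which is exactly the Archimedean property. For (3), I would use the identity $g^+ \in \m \iff \widehat g(\m) \leq 0$. This makes $\{\m : \widehat g(\m) > 0\}$ the complement of the closed hull of $g^+$, and hence \emph{open} in the hull-kernel topology. Applying the same principle to $ng - mu$ gives openness of $\{\m : \widehat g(\m) > m/n\}$ for every rational $m/n$, and density of $\Q$ in $\R$ upgrades this to openness of $\widehat g^{-1}(q, \infty)$ for every real $q$. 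The symmetric argument handles half-lines to the left, so $\widehat g$ is continuous.

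The main obstacle is (4), the Yosida identification. Define $\Phi\colon X \to \Max G$ by $\Phi(x) := \m_x := \{f \in G : f(x) = 0\}$. Evaluation at $x$ is a unital $\ell$-homomorphism $G \to \R$ with kernel $\m_x$, so $\m_x$ is maximal; $\Phi$ is injective by separation; and $\Phi$ is continuous because the preimage of the principal zero set $\widehat g^{-1}(0)$ in $\Max G$ is exactly $g^{-1}(0)$, which is closed in $X$. Surjectivity is the crux: given $\m \in \Max G$, I would show $\bigcap_{f \in \m} f^{-1}(0) \neq \emptyset$ by a finite intersection property argument. If this intersection were empty, compactness of $X$ would yield $f_1, \ldots, f_k \in \m$ with $\bigcap_i f_i^{-1}(0) = \emptyset$; then $h := |f_1| \vee \cdots \vee |f_k| \in \m$ is strictly positive on $X$, hence bounded below by some $\varepsilon \cdot 1_X$ with $\varepsilon > 0$ by compactness, whence $nh \geq u$ for $n > 1/\varepsilon$. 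Order-convexity of the $\ell$-ideal $\m$ forces $u \in \m$, contradicting properness. Uniqueness of the common zero follows from separation, and a continuous bijection between compact Hausdorff spaces is automatically a homeomorphism.
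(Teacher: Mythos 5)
Your proposal is correct and follows the classical route that the paper itself merely cites: the Dedekind-cut construction for H\"older's theorem, reduction of (2) to (1) via simplicity of $G/\m$, the identity $g^{+}\in\m \iff \widehat{g}(\m)\leq 0$ for continuity, and Yosida's compactness/finite-intersection argument with $|f_1|\vee\cdots\vee|f_k|$ for (4). The only difference is that the paper delegates all four parts to the literature (Bigard--Keimel--Wolfenstein and Yosida) whereas you reconstruct the standard proofs in full; every step you supply is sound.
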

\begin{proof} 1.
This is \cite[2.6.3]{bkw}, with the additional observation that the unital assumption makes the embedding unique.

\smallskip
\noindent 2. Observe that $(G/\m,[u]_\m)$ is simple, because $\m$ is maximal, and the $\ell$-ideals of $G/\m$ are in one-one inclusion-preserving correspondence with the $\ell$-ideals of $G$ containing $\m$  \cite[2.3.8]{bkw}. Simplicity  entails easily that $G/\m$ is Archimedean and totally ordered, so 1 applies.

\smallskip \noindent 3. This was  first proved in \cite[]{yosida}, for unital lattice-ordered vector spaces (known as \emph{vector lattices}). The proof for $\ell$-groups is a straightforward adaptation of Yosida's argument in \cite{yosida}, using H\"older's Theorem (1--2) in place of his \cite[Lemma 2]{yosida}.

\smallskip \noindent 4. Essentially  \cite[Theorem 4]{yosida}.
\end{proof}
\begin{lemma}\label{l:ss}For any unital $\ell$-group $(G,u)$, the following are equivalent.
\begin{enumerate}
\item $(G,u)$  is semisimple.
\item $G$ is Archimedean.
\item The \emph{radical} of $G$ is trivial, that is, $\Rad{G}:=\bigcap \Max{G}=\{0\}$.
\end{enumerate}
\end{lemma}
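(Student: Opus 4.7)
The equivalence $(1) \Leftrightarrow (3)$ is immediate from the definition of semisimplicity recalled in the introduction, so the substance of the lemma is the equivalence $(2) \Leftrightarrow (3)$. For $(3) \Rightarrow (2)$, my plan is to embed $G$ into a product of copies of $(\R, 1)$. Triviality of $\Rad G$ yields an injective unital $\ell$-homomorphism $G \hookrightarrow \prod_{\m \in \Max G} G/\m$, and by H\"older's theorem (Lemma~\ref{l:holder}(2)) each factor $(G/\m, [u]_\m)$ embeds unitally into $(\R, 1)$. One then checks that the product $(\R, 1)^{\Max G}$ is Archimedean coordinate-by-coordinate: given $0 < g \leq h$ in the product, pick a coordinate $i$ with $g_i > 0$ and apply the Archimedean property of $\R$ in coordinate $i$ to find $n$ with $n g_i \not\leq h_i$, whence $n g \not\leq h$. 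Since the Archimedean property is inherited by unital $\ell$-subgroups, $G$ is Archimedean.

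For $(2) \Rightarrow (3)$, the plan is to identify the radical with the set of $u$-infinitesimals
\[
\mathrm{Inf}(u) := \{g \in G \mid n|g| \leq u \text{ for all } n \geq 0\},
\]
and then to use the Archimedean hypothesis to force $\mathrm{Inf}(u) = \{0\}$. The inclusion $\mathrm{Inf}(u) \subseteq \Rad G$ is routine via H\"older: for $g \in \mathrm{Inf}(u)$ and $\m \in \Max G$, the image of $|g|$ under $\mathfrak{h}_\m \colon G/\m \hookrightarrow (\R, 1)$ is bounded by $1/n$ for every $n \geq 1$, hence vanishes, so $g \in \m$. Granting also the reverse inclusion $\Rad G \subseteq \mathrm{Inf}(u)$ --- a standard structural result for unital $\ell$-groups, cf.\ \cite{bkw} --- the Archimedean hypothesis finishes the proof: if $n|g| \leq u$ for every $n$, then in particular $|g| \leq u$, so Archimedean applied to $|g|$ and $u$ forces $|g| = 0$, and hence $g = 0$.

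The main obstacle is the inclusion $\Rad G \subseteq \mathrm{Inf}(u)$, which amounts to a Yosida-style separation principle: given $g \in G$ that is not a $u$-infinitesimal, one must exhibit a maximal $\ell$-ideal of $G$ that does not contain $g$. This step is not transparent from the definitions alone --- a direct Zorn argument produces a prime ideal avoiding $g$, but passage to a maximal one that still avoids $g$ can fail in the absence of further structure --- and it is here that appeal to the standard unital $\ell$-group literature becomes essential.
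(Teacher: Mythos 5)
Your overall architecture is sound and is essentially an expanded version of what the paper compresses into citations: the paper proves $2\Leftrightarrow 3$ by invoking Yosida--Fukamiya's Theorem~1 (adapted to $\ell$-groups via H\"older), and handles the remaining equivalence via the correspondence between simple quotients and maximal $\ell$-ideals, whereas you take $1\Leftrightarrow 3$ as definitional and argue $3\Rightarrow 2$ explicitly by embedding $G$ into $(\R,1)^{\Max{G}}$ and checking that products of copies of $\R$ and their $\ell$-subgroups are Archimedean. That direction is complete and correct. For $2\Rightarrow 3$ you correctly isolate the crux, namely the inclusion $\Rad{G}\subseteq\mathrm{Inf}(u)$, but you leave it as a citation; note that this inclusion \emph{is} (the adapted form of) the Yosida--Fukamiya theorem that the paper cites, so at this point your proof and the paper's have deferred exactly the same content to the literature. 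Your diagnosis of the difficulty is also accurate: an $\ell$-ideal maximal among those excluding a given $g$ is prime but need not be maximal, so a naive Zorn argument does not suffice.

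The missing step can be closed using the unit, as follows. Suppose $g\notin\mathrm{Inf}(u)$, say $n_0|g|\not\leq u$. Put $a:=(u-n_0|g|)\vee 0$ and $b:=(n_0|g|-u)\vee 0$, so that $a\wedge b=0$ and $b>0$. Let $I$ be the $\ell$-ideal generated by $a$. Then $u\notin I$: otherwise $u\leq ka$ for some $k\geq 1$; since $a\wedge b=0$ entails $ka\wedge b=0$, this would give $u\wedge b=0$, while $0<b\leq n_0|g|\leq mu$ for some $m$ (here the strong unit enters) yields $b=b\wedge mu\leq m(b\wedge u)=0$, a contradiction. Since an $\ell$-ideal is proper exactly when it omits $u$, Zorn's lemma extends $I$ to a maximal $\ell$-ideal $\m$. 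From $a\in\m$ we get $[u]_\m\leq n_0\,[|g|]_\m$, hence $\mathfrak{h}_\m([|g|]_\m)\geq 1/n_0>0$ and $g\notin\m$, as required. With this paragraph inserted, your argument is a complete and correct proof, and in fact more self-contained than the paper's own.
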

\begin{proof}The equivalence of 1 and 2 is an immediate consequence of the fact that a quotient $G/I$, for $I$ a proper $\ell$-ideal of $G$, is simple if, and only if, $I$ is maximal. And this, in turn, follows at once from \cite[2.3.8]{bkw}. The equivalence of 2 and 3 was first proved, for unital vector lattices, in \cite[Theorem 1]{yosidafuka}. The proof for unital $\ell$-groups is a straightforward adaptation using H\"older's Theorem (Lemma \ref{l:holder}.(1--2)) in place of \cite[Lemma 1]{yosidafuka}. The present lemma is also proved for MV-algebras in \cite[3.6.1 and 3.6.4]{cdm}.\end{proof}
\begin{remark}\label{r:ss}Caution: the equivalence of 2 and 3 in Lemma \ref{l:ss} fails in the absence of a unit. See \cite[\S 3]{yosidafuka} for an early example due to Nakayama.\qed
\end{remark}

\section{Proof of  theorem.}\label{s:theo}
%
%\begin{notationnonum} Throughout this section, we continue to write $(G,u)$ for a unital $\ell$ group. \qed
%\end{notationnonum}
%
%
We begin by adding to Lemma \ref{l:ss} a spectral characterisation of the Archimedean property. 
\begin{remark}The following lemma should be compared to Yang's aforementioned result \cite[Proposition 5.3.1]{Yang06}.\qed
\end{remark}

\begin{lemma}\label{l:ssspec}For any unital $\ell$-group $(G,u)$, the following are equivalent.
\begin{enumerate}
\item $(G,u)$  is semisimple.
\item $\Max{G}$ is dense in $\Spec{G}$.
\end{enumerate}
\end{lemma}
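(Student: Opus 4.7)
The plan is to compute the closure $\cl(\Max{G})$ inside $\Spec{G}$ directly by means of formula (\ref{eq:cl}), and to recognise it as the set of primes containing the radical. Specifically,
\begin{align*}
\cl(\Max{G}) \;=\; \{\p \in \Spec{G} \mid \p \supseteq \textstyle\bigcap \Max{G}\} \;=\; \{\p \in \Spec{G} \mid \p \supseteq \Rad{G}\}.
\end{align*}
Hence the density of $\Max{G}$ in $\Spec{G}$ is equivalent to the algebraic statement that every prime $\ell$-ideal of $G$ contains $\Rad{G}$. This reduction converts a topological condition into a condition entirely internal to the lattice $\Idl{G}$, and is the only observation one needs to make before unfolding both implications.

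For (1)$\Rightarrow$(2), assume $(G,u)$ semisimple. By Lemma \ref{l:ss} we have $\Rad{G} = \{0\}$, and since every $\ell$-ideal contains $\{0\}$, every prime trivially satisfies $\p \supseteq \Rad{G}$. The displayed equality therefore gives $\cl(\Max{G}) = \Spec{G}$, which is the density we want.

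For (2)$\Rightarrow$(1), assume $\cl(\Max{G}) = \Spec{G}$, so that every $\p \in \Spec{G}$ contains $\Rad{G}$; equivalently, $\Rad{G} \subseteq \bigcap \Spec{G}$. Now $\{0\}$ is itself an $\ell$-ideal of $G$, and the general fact, recalled in the proof of Lemma \ref{l:spec basics}, that each $\ell$-ideal is the intersection of the prime $\ell$-ideals containing it \cite[2.5.5]{bkw}, applied to $\{0\}$, yields $\{0\} = \bigcap \Spec{G}$. Consequently $\Rad{G} = \{0\}$, and a second appeal to Lemma \ref{l:ss} delivers semisimplicity of $(G,u)$.

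I do not foresee any real obstacle: the argument is a short composition of (\ref{eq:cl}), the characterisation of semisimplicity via the vanishing of the radical in Lemma \ref{l:ss}, and the intersection-of-primes identity \cite[2.5.5]{bkw}. The only spot requiring a moment of care is that the identity $\bigcap \Spec{G} = \{0\}$ must be obtained from \cite[2.5.5]{bkw}, not derived from semisimplicity — otherwise the converse direction would be circular.
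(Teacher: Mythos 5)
Your proof is correct, and it takes a genuinely different and more economical route than the paper's. You compute $\cl{(\Max{G})}$ once and for all via the Galois-connection formula (\ref{eq:cl}), obtaining $\cl{(\Max{G})}=\V{(\Rad{G})}$, and then both implications collapse to the single chain of equivalences: $\Max{G}$ dense $\iff$ every prime contains $\Rad{G}$ $\iff$ $\Rad{G}\subseteq\bigcap\Spec{G}=\{0\}$ $\iff$ $(G,u)$ semisimple, the last step by Lemma \ref{l:ss}.(1)$\Leftrightarrow$(3). The paper instead argues both directions by contraposition and uses the Archimedean characterisation (Lemma \ref{l:ss}.(2)) for the forward direction: from a non-Archimedean pair $0<ng\leq h$ it manufactures, via H\"older's Theorem and the Archimedean property of $\R$, an explicit proper closed set $\V{(g)}$ containing $\Max{G}$; the converse direction extracts a nonzero element of $\Rad{G}$ from a basic open set missing $\Max{G}$. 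What the paper's argument buys is a concrete witness for the failure of density tied directly to an infinitesimal element; what yours buys is brevity and independence from H\"older's Theorem, since it runs entirely inside the lattice $\Idl{G}$ and the $(\V,\I)$ machinery already set up in Lemma \ref{l:spec basics}. Your closing caution is well placed: the identity $\bigcap\Spec{G}=\{0\}$ must come from the fact that every $\ell$-ideal is an intersection of primes (\cite[2.5.5]{bkw}, applied to the $\ell$-ideal $\{0\}$; the paper cites \cite[10.1.8]{bkw} for the same fact), and not from semisimplicity, and you have kept the two sources cleanly apart.
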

\begin{proof} First suppose that $G$ is not semisimple, hence not Archimedean by Lemma \ref{l:ss}. There exist $g,h \in G$ with $0<ng\leq h$ for all integers $n\geq 1$. Then $\V{(g)}\supseteq \Max{G}$. Indeed, assume by way of contradiction that $[g]_{\m}>0$ for some $\m \in \Max{G}$. By Lemma \ref{l:holder}.2, $[g]_{\m}, [h]_{\m}\in \R$ to within the unique unital $\ell$-embedding $\mathfrak{h}_{\m}$, and $0<[g]_{\m}\leq [h]_{\m}$. By the Archimedean property of $\R$, there is an integer $n_{0}\geq 1$ with $n_{0}[g]_{\m}\geq [h]_{\m}$, whence $n_{0}g \not \leq h$ in $G$, contradiction. This proves $\V{(g)}\supseteq \Max{G}$. Now, since $\V{(g)}$ is closed, the closure of $\Max{G}$ is contained in $\V{(g)}$.
And since $g \neq 0$ and $\bigcap \Spec{G}=\{0\}$ \cite[10.1.8]{bkw}, there is $\p \in G$ with $[g]_{\p}\neq 0$. In other words, $\V{(g)}\subset \Spec{G}$. In conclusion, $\Spec{G} \supset \V{(g)}\supseteq \cl{\Max{G}}$, and $\Max{G}$ is not dense in $\Spec{G}$. 

Conversely, suppose $\cl{\Max{G}}\subset \Spec{G}$. The compact open sets $\{\V{(g)}^{\rm c} \mid g \in f\}$, where $\V{(g)}^{\rm c}$ is the complement of  $\V{(g)}$ in $\Spec{G}$, form a basis of $\Spec{G}$, by Lemma \ref{l:spec basics}. Hence there is $S \subseteq G$ such that the  open set $O:=\left(\cl{\Max{G}}\right)^{\rm c}$ may be written as $O=\bigcup_{g \in S} \V{(g)}^{\rm c}$. Therefore, $\V{(g)}\supseteq \Max{G}$ for each $g \in S$. Since $O$ is non-empty, we
must have $\V{(g_{0})}^{\rm c}\neq\emptyset$ for some $g_0 \in S$, so that $g_{0}\neq 0$.  But then, since $g_{0}\in\m$ for each $\m\in\Max{G}$,
we have $0\neq g_{0}\in \Rad{G}=\bigcap\Max{G}\neq \{0\}$. By Lemma \ref{l:ss}, $G$ is not semisimple.
\end{proof}
\begin{lemma}\label{l:specquotients}Let $(G,u)$ be a unital $\ell$-group, and let $I$ be an $\ell$-ideal of $G$. Then $\Spec{(G/I)}$ is homeomorphic to $\V{(I)}$, equipped with the subspace topology it inherits from $\Spec{G}$.
\end{lemma}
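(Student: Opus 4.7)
The plan is to exhibit the homeomorphism by the canonical ``correspondence theorem'' bijection, and then to check that closed sets on both sides match.

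First, let $\pi\colon G\to G/I$ denote the quotient map, and define
\begin{align*}
\phi\colon \V(I) \longrightarrow \Spec(G/I), \qquad \p \longmapsto \p/I,
\end{align*}
with candidate inverse $\psi\colon \q \mapsto \pi^{-1}(\q)$. I would first verify set-theoretically that $\phi$ and $\psi$ are mutually inverse bijections. By \cite[2.3.8]{bkw}, the $\ell$-ideals of $G/I$ are in inclusion-preserving bijection with the $\ell$-ideals of $G$ containing $I$. Since primeness is characterised by $G/\p$ being totally ordered, and $(G/\p) \cong (G/I)/(\p/I)$, this inclusion-preserving bijection restricts to a bijection between $\V(I)$ and $\Spec(G/I)$.

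Next I would check continuity on the level of closed sets. A typical closed set of $\Spec(G/I)$ has the form $\V'(J):=\{\q\in\Spec(G/I)\mid \q\supseteq J\}$ for some $\ell$-ideal $J$ of $G/I$. Under the bijection, $\phi^{-1}(\V'(J))=\{\p\in\V(I)\mid \p/I\supseteq J\}=\{\p\in\V(I)\mid \p\supseteq \pi^{-1}(J)\}$. Since $\pi^{-1}(J)\supseteq I$, this equals $\V(\pi^{-1}(J))\cap\V(I)=\V(\pi^{-1}(J))$, which is closed in $\Spec G$ and hence closed in the subspace $\V(I)$. Conversely, a typical closed set of $\V(I)$ in the subspace topology has the form $\V(K)\cap\V(I)=\V(\langle K\cup I\rangle)$ for some subset $K\subseteq G$. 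Its image under $\phi$ is $\{\p/I\mid \p\in\V(I),\ \p\supseteq K\}=\V'(\langle K\cup I\rangle/I)$, again closed. Since $\phi$ is a bijection carrying closed sets to closed sets and pulling closed sets back to closed sets, it is a homeomorphism.

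I do not expect any serious obstacle: the substance of the proof is the correspondence theorem of \cite[2.3.8]{bkw}, and the topological matching is routine bookkeeping with the definition \textup{(\ref{eq:v})} of $\V$ and the observation that $\pi^{-1}$ commutes with the formation of $\ell$-ideal generation up to containing $I$. The only point that deserves a moment's care is the identification $\V(K)\cap\V(I)=\V(\langle K\cup I\rangle)$, which is immediate from the Galois-connection property in Lemma \ref{l:spec basics}.2.
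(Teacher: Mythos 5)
Your proposal is correct and follows essentially the same route as the paper: the correspondence theorem of \cite[2.3.8]{bkw} combined with the isomorphism $G/\p\cong (G/I)/(\p/I)$ to match primes, followed by the identification of closed sets on both sides. The only difference is that you spell out the topological bookkeeping explicitly, where the paper dismisses it as clear by the same token.
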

\begin{proof}By \cite[2.3.8]{bkw}, there is a one-one inclusion preserving correspondence between the $\ell$-ideals of $G$ containing $I$, and the $\ell$-ideals of $G/I$. Specifically, the correspondence is given by
\begin{align*}
J \supseteq I \ \overset{\rho}{\longmapsto} \ J/I:=\{[j]_I \mid j \in J \}.
\end{align*}
The bijection $\rho$, being inclusion-preserving, restricts to a bijection between maximal $\ell$-ideals of $G$ containing $I$, and maximal $\ell$-ideals of $G/I$. To see that $\rho$ also restricts to a bijection between prime $\ell$-ideals, we use the result in universal algebra \cite[3.11]{cohn} that $G/J\cong (G/I)/(J/I)\cong
(G/I)/\rho(J)$. Since $G/J$ is totally ordered if, and only if, $J$ is prime, $\rho(J)$ is prime if, and only if, $J$ is prime. Since $\V{(I)}$ consists of the prime $\ell$-ideals of $G$ extending $I$, it is clear by the same token that the subspace topology on $\V{(I)}$ agrees with the topology on $\Spec{G/I}$.
\end{proof}
\begin{corollary}[Spectral characterisation of strong semisimplicity]\label{c:ss}Let $(G,u)$ be a unital $\ell$-group. Then
$(G,u)$ is strongly semisimple if, and only if, each closed, co-compact subset $K\subseteq \Spec{G}$ satisfies $K=\cl{(K\cap \Max{G})}$.
\end{corollary}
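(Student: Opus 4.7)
The plan is to assemble the corollary from three ingredients already on the table: the spectral criterion for (ordinary) semisimplicity in Lemma \ref{l:ssspec}, the identification of $\Spec{(G/I)}$ with $\V{(I)}$ in Lemma \ref{l:specquotients}, and the bijection between principal $\ell$-ideals and co-compact closed subsets of $\Spec{G}$ recorded in Lemma \ref{l:spec basics}.7. No new calculation with elements of $G$ is needed; the work is purely in translating definitions across the spectral correspondence.

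First I would unfold the definition: $(G,u)$ is strongly semisimple iff for every principal $\ell$-ideal $P$ of $G$, the quotient $(G/P,[u]_{P})$ is semisimple. By Lemma \ref{l:ssspec} applied to $G/P$, this is equivalent to $\Max{(G/P)}$ being dense in $\Spec{(G/P)}$. Next I would invoke Lemma \ref{l:specquotients} to identify $\Spec{(G/P)}$ with $\V{(P)}$ as a subspace of $\Spec{G}$; the inclusion-preserving bijection $J\supseteq P \mapsto J/P$ from the proof of that lemma restricts to a bijection between maximal $\ell$-ideals of $G$ containing $P$ and maximal $\ell$-ideals of $G/P$, so under the homeomorphism $\Max{(G/P)}$ corresponds to $\V{(P)}\cap \Max{G}$. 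Density of $\Max{(G/P)}$ in $\Spec{(G/P)}$ therefore translates into $\V{(P)}=\cl_{\V{(P)}}(\V{(P)}\cap \Max{G})$, where $\cl_{\V{(P)}}$ is closure in the subspace topology. Since $\V{(P)}$ is closed in $\Spec{G}$, subspace closure coincides with ambient closure, and the condition reads $\V{(P)}=\cl(\V{(P)}\cap \Max{G})$.

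Finally, by Lemma \ref{l:spec basics}.7 the assignment $P\mapsto \V{(P)}$ yields a bijection between principal $\ell$-ideals of $G$ and closed co-compact subsets of $\Spec{G}$. Hence quantifying over all principal $\ell$-ideals $P$ is the same as quantifying over all closed co-compact subsets $K\subseteq \Spec{G}$, and the condition becomes $K=\cl(K\cap \Max{G})$. This chain of equivalences is precisely the statement of the corollary.

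The only potentially delicate points I would flag and verify carefully are (i) that $\Max{(G/P)}$ really corresponds to $\V{(P)}\cap \Max{G}$ under the homeomorphism of Lemma \ref{l:specquotients} (a direct check from the inclusion-preserving bijection of $\ell$-ideals), and (ii) that subspace closure in $\V{(P)}$ agrees with ambient closure in $\Spec{G}$ (immediate because $\V{(P)}$ is closed). No real obstacle is expected: this corollary is the formal shadow, in the Zariski spectrum, of Lemma \ref{l:ss} combined with Lemma \ref{l:ssspec}, and strong semisimplicity was defined precisely to make this translation go through.
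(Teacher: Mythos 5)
Your proposal is correct and follows exactly the route the paper intends: its proof of Corollary \ref{c:ss} is the one-line instruction ``Combine Lemmata \ref{l:ssspec} and \ref{l:specquotients}'', and your write-up is precisely that combination, with the identification of closed co-compact sets with the sets $\V{(P)}$ for $P$ principal supplied by Lemma \ref{l:spec basics}.7. The two delicate points you flag (that $\Max{(G/P)}$ corresponds to $\V{(P)}\cap\Max{G}$, and that subspace closure in the closed set $\V{(P)}$ agrees with ambient closure) are exactly the details the paper leaves implicit, and you resolve them correctly.
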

\begin{proof}Combine Lemmata \ref{l:ssspec} and \ref{l:specquotients}.
\end{proof}

\begin{lemmas}\label{d:sspatch}Let $(G,u)$ be a unital $\ell$-group, and let $\{U_i\}_{i=1}^n$ be a finite collection of  closed, co-compact subsets of $\Spec{G}$. Suppose that $\{g_i\}_{i=1}^n\subseteq G$ is such that $[g_i]_\m=[g_j]_\m$ for each $i , j \in \{1,\ldots, n\}$ and each $\m \in U_i \cap U_j \cap \Max{G}$. If $(G,u)$ is strongly semisimple, there exists an element $g \in G$ such that $[g]_\p=[g_i]_\p$ for each $i \in \{1,\ldots,n\}$ and each $\p \in U_i$.
\end{lemmas}
\begin{proof}
We set
\begin{align*}
M_i &:= U_i \cap \Max{G} \textup{ for each } i\in\{1,\ldots, n\}.
\end{align*}
Let $\p \in U_i \cap U_j$ for some $i,j \in \{1,\ldots,n\}$. If we show  $[g_i]_\p=[g_j]_\p$, then the Patching Lemma \ref{c:patch} applies, and the present lemma follows.
Equivalently,
upon setting $h:=g_i-g_j$, we need to show $[h]_{\p}\neq 0$.

Since closed, co-compact sets are stable under finite intersections (Lemma \ref{l:spec basics}.7), $K:=U_i\cap U_j$ is closed and co-compact.  Since $(G,u)$ is strongly semisimple, by Corollary \ref{c:ss} we have
\begin{align}\label{e:density}
K=\cl{(K\cap \Max{G})}.
\end{align}
If we had $[h]_{\m}=0$ for each $\m \in K\cap \Max{G}
$, we would infer $[h]_{\p}=0$, too. Indeed, since $\p \in \cl{(K\cap \Max{G})}$ by (\ref{e:density}), we  have $\p \supseteq  \bigcap (K\cap \Max{G})$ by (\ref{eq:cl}). Also, since $h\in\m$ for each $\m \in K\cap \Max{G}$, we have $h \in \bigcap (K\cap \Max{G})$. Hence $h \in \p$, that is, $[h]_\p=0$, and the proof is complete. 
\end{proof}
\paragraph{End of proof of theorem} Let $\{h_i\}_{i=1}^n\subseteq G$ be elements such that $Z_i=\{\m \in \Max{G} \mid h_i\in \m\}$, $i=1,\ldots,n$. Set
\begin{align*}
U_i := \V{(h_i)}, \, i=1,\ldots,n,
\end{align*}
so that $U_i \cap \Max{G}=Z_i$.
By hypothesis,  $\widehat{g}_i(\m)=\widehat{g}_j(\m)$ for each $i, j \in \{1,\ldots, n\}$ and each $\m \in Z_i \cap Z_j$,  \ie\ $[g_i]_\m=[g_j]_\m$ for each $i, j \in \{1,\ldots, n\}$ and each $\m \in U_i \cap U_j\cap \Max{G}$. An application of Lemma \ref{d:sspatch} yields $g \in G$ such that $[g]_\p=[g_i]_\p$ for each $i \in \{1,\ldots,n\}$ and each $\p \in U_i$; in particular, $\widehat{g}(\m)=\widehat{g}_i(\m)$ for each $i \in \{1,\ldots,n\}$ and each $\m
\in Z_i$. To prove the uniqueness assertion, assume $\Max{G}=\bigcup_{i=1}^n Z_i$, ad suppose $g, g'\in G$ satisfy $[g]_\m=[g']_\m=[g_i]_\m$ for each $i\in \{1,\ldots,n\}$ and each $\m \in Z_i$. Then $[g]_\m=[g']_\m$ for each $\m \in \bigcup_{i=1}^n Z_i =\Max{G}$. Since $(G,u)$ is semisimple, we have $\Rad{G}=\{0\}$  by Lemma \ref{l:ss}. Then  the
map $G \longrightarrow \prod_{\m \in \Max{G}} G/\m$ given by $g \longmapsto ([g]_\m)_{\m \in \Max{G}}$ has trivial kernel, and thus is injective. It follows that $g=g'$, as was to be shown. \qed
\begin{lemmasc}\label{sscrt} Let $(G,u)$ be a  unital $\ell$-group, and let  $\{I_i\}_{i=1}^n$ be a finite collection of principal $\ell$-ideals of  $G$. Suppose that $\{g_i\}_{i=1}^n$ is a finite subset of $G$ such that 
\begin{align*}
[g_i]_{\m}=[g_j]_{\m}
\end{align*}
for each $i ,j \in \{1,\ldots, n\}$ and each maximal $\ell$-ideal $\m$ that extends $I_{i}\vee I_{j}$. If $(G,u)$ is strongly semisimple, there exists an element $g \in G$
 such that $[g]_{I_i}=[g_i]_{I_i}$ for each $i \in \{1,\ldots,n\}$.
\end{lemmasc}
\begin{proof}This is a direct dualisation of  the strongly semisimple Patching Lemma \ref{d:sspatch} to $G$, using Lemma \ref{l:spec basics}.
\end{proof}
\begin{example}Theorem \ref{sscrt} fails in the absence of strong semisimplicity, and hence so does the theorem stated in Section \ref{s:intro}. Consider, for example, the lexicographic product $\Z\vec{\times} \Z$ of the additive group of integers with itself. Thus, $\Z\vec{\times} \Z$ is free Abelian of rank $2$ as a group, and $(a,b)\geq (a',b')$ if either $a > a'$, or else $a=a'$ and then $b\geq b'$. Now $\Z\vec{\times}\Z$ is not  semisimple, and  has exactly two $\ell$-ideals: the  maximal $\ell$-ideal $I_1=:\{(0,b)\mid b \in \Z\}$, and the prime non-maximal $\ell$-ideal $I_2:=\{(0,0)\}\subseteq I_1$. They are evidently both principal. Consider the elements $g_1:=(0,0)$ and $g_2:=(0,1)$. Then $[g_1]_{I_1}=[g_2]_{I_1}$. However, there can be no $g\in\Z\vec{\times}\Z$ with $[g]_{I_2}=[g_1]_{I_2}=[g_2]_{I_2}$, simply because $[g_1]_{I_2}=g_1\neq g_2= [g_2]_{I_2}$.\qed
\end{example}

\end{document}